\documentclass[12pt, amscd]{amsart}
\usepackage{amscd}
\usepackage{verbatim}

\input xy
\xyoption{all}

\usepackage{amssymb}
\usepackage{mathrsfs}

\textwidth 14cm \textheight 22cm \headheight 0.5cm \evensidemargin
1.25cm \oddsidemargin 1.25cm

\DeclareMathAlphabet{\cat}{OT1}{cmss}{m}{sl}

\newtheorem{theorem}{Theorem}[section]

\newtheorem{lemma}[theorem]{Lemma}
\newtheorem{corollary}[theorem]{Corollary}

\theoremstyle{definition}



\newcommand{\xra}{\xrightarrow}

\newcommand{\tens}{\otimes}




\newcommand{\Gr}{\operatorname{Gr}}
\newcommand{\Fl}{\operatorname{Fl}}

\newcommand{\Hom}{\operatorname{Hom}}

\newcommand{\D}{\operatorname{D}}

\newcommand{\rank}{\operatorname{rank}}



\newcommand{\cT}{\mathcal T}

\newcommand{\cS}{\mathcal S}



\usepackage[hypertex]{hyperref}

\title[Semiorthogonal decompositions for twisted grassmannians] 
{Semiorthogonal decompositions for twisted grassmannians}

\author
[S. Baek] {Sanghoon Baek}

\email {baek.sanghoon@gmail.com}

\begin{document}
\begin{abstract}
In this article, we present semiorthogonal decompositions for twisted forms of grassmannians.

\end{abstract}

\maketitle

\section{introduction}

In \cite{Orl} Orlov gave the semiorthogonal decompositions for projective, grassmann, and flag bundles, which generalize the full exceptional collections on the corresponding varieties by Beilinson \cite{Bei} and Kapranov \cite{Kap}. 

In the case of projective bundles, Bernardara \cite{Ber} extended the semiorthogonal decomposition to the twisted forms. In this paper, we present, in a similar way,  semiorthogonal decompositions for twisted forms of grassmannians. The proof of the main theorem uses the study of $K$-theory of twisted grassmannians by Levine-Srinivas-Weyman \cite{LSW} and Panin \cite{Pan}.

\bigskip

{\bf Notation and Conventions} In this paper, a partition means a nonincreasing sequence of numbers. Given a partition $\alpha=(\alpha_{i})$, the conjugate of $\alpha$, written by $\alpha^{*}$, is the partition $(\alpha^{*}_{j})$, where $\alpha^{*}_{j}=|\{i\,|\, \alpha_{i}\geq j\}|$. For a scheme $X$ over $Y$, we denote by $\Delta(X/Y)$ the diagonal of $X\!\times_{Y}\!X$. We abbreviate $\mathscr{F}\boxtimes \mathscr{G}=\pi_{1}^{*}\mathscr{F}\tens \pi_{2}^{*}\mathscr{G}$ for sheaves $\mathscr{F},\mathscr{G}$ over $X$, where $\pi_{i}$ is the projection of $X\!\times_{Y}\!X$. For a scheme $X$, we denoted by $\D(X)$ the bounded derived category of coherent sheaves on $X$.

\section{Preliminaries}

In this section we recall some definitions from \cite{Bon}. Let $F$ be a field. We fix a triangulated $F$-linear category $\cT$.

Given a full subcategory $\cS$ of $\cT$, the (right) \emph{orthogonal complement}, denoted by $\cS^{\bot}$, is a full subcategory whose objects are objects $T$ in $\cT$ satisfying
\[\Hom_{\cT}(S,T)=0\]
for all $S\in \cS$.

A full triangulated subcategory $\cS$ of $\cT$ is (right) \emph{admissible} if for any $T\in \cT$ there is a distinguished triangle
\[S\to T\to S^{\bot}\to S[1]\]
with $S\in \cS$ and $S^{\bot}\in \cS^{\bot}$, i.e. $\cT=\langle \cS, \cS^{\bot}\rangle$.

Now, we consider a sequence $(\cS_{1}, \ldots, \cS_{n})$ of admissible subcategories of $\cT$. This sequence is called \emph{semiorthogonal} if
\begin{equation}\label{semiorthogonal}
\cS_{i}\subset \cS_{j}^{\bot}
\end{equation}
for $1\leq i<j\leq n$. A semiorthogonal sequence $(\cS_{1}, \ldots, \cS_{n})$ of $\cT$ is called \emph{semiorthogonal decomposition} if $\cT=\langle \cS_{1}, \ldots, \cS_{n}\rangle$.

For example, the sequence $(\cS, \cS^{\bot})$ for an admissible full triangulated subcategory $\cS$ gives a semiorthogonal decomposition of $\cT$. By definition, any generating sequence $(\cS_{1}, \ldots, \cS_{n})$ of full subcategories of $\cT$ satisfying $(\ref{semiorthogonal})$ is a semiorthogonal decomposition of $\cT$.

\section{Twisted Grassmannians}

Let $X$ be a Noetherian scheme of characteristic $0$ and let $\mathscr{A}$ be a sheaf of Azumaya algebras of rank $n^{2}$ over $X$. For an integer $1\leq k< n$, a twisted grassmannian $p:\Gr(k,\mathscr{A})\to X$ is defined by the representable functor from the category $\cat{Schemes/X}$ of schemes over $X$ to the category of $\cat{Sets}$ of sets given by $(Y\stackrel{\phi}\to X)\mapsto$ the set of sheaves of left ideals $\mathscr{I}$ of $\phi^{*}\mathscr{A}$ such that $\phi^{*}\mathscr{A}/\mathscr{I}$ is a locally free $\mathscr{O}_{Y}$-modules of rank $n(n-k)$.

There is an \'etale covering $i:U\to X$ and a locally free sheaf $\mathscr{E}$ of rank $n$ over $U$ with the following pullback diagram
$$\xymatrixcolsep{4pc}
\xymatrix{
\Gr(k,\mathscr{E}\!nd(\mathscr{E})) \ar[r]^{j} \ar[d]^{q} & \Gr(k,\mathscr{A})\ar[d]^{p}  \\
U \ar[r]^{i} & X,
}
$$
where $\Gr(k,\mathscr{E}\!nd(\mathscr{E}))$ is naturally isomorphic to $\Gr(k,\mathscr{E})$.

Consider the tautological exact sequence of sheaves on $\Gr(k,\mathscr{E})$
\[0\to\mathscr{R}\to q^{*}\mathscr{E}\to \mathscr{T}\to 0,\]
where $\rank(\mathscr{R})=k$. For a partition $\alpha=(\alpha_1,\cdots, \alpha_{k})$ with $0\leq \alpha_i\leq n-k$, we denoted by $S^{\alpha}$ the Schur functor for $\alpha$. We define $S(\alpha)$ to be the full subcategory of $\D(\Gr(k,\mathscr{A}))$ generated by $\mathscr{M}$ in $\D(\Gr(k,\mathscr{A}))$ satisfying
\[\mathscr{M}\!\!\mid_{\Gr(k,\mathscr{E})}\simeq q^{*}\!\mathscr{N}\tens S^{\alpha}\!\mathscr{R},\]
for some $\mathscr{N}\in \D(U)$.

\begin{lemma}\label{orthogonality}
Let $\alpha, \alpha'$ be two distinct partitions with $0\leq\alpha_i,\alpha'_{i}\leq n-k$ and let $\mathscr{M}\in S(\alpha)$ and $\mathscr{M}'\in S(\alpha')$. Then $R\mathscr{H}\!om(\mathscr{M},\mathscr{M}')=0$.
\end{lemma}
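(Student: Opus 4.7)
The plan is to reduce the problem to a Schur-functor computation on the grassmann bundle by étale descent. Since $j$ is obtained by base change from the étale covering $i$, it is itself étale and surjective, hence faithfully flat; so the vanishing of the sheaf $R\mathscr{H}om(\mathscr{M}, \mathscr{M}')$ on $\Gr(k, \mathscr{A})$ can be tested after pulling back along $j$. By flatness of $j$ and the fact that $\mathscr{M}, \mathscr{M}'$ are bounded complexes of coherent sheaves, the pullback $j^{*}R\mathscr{H}om(\mathscr{M}, \mathscr{M}')$ identifies with $R\mathscr{H}om(j^{*}\mathscr{M}, j^{*}\mathscr{M}')$ on $\Gr(k, \mathscr{E})$, and the defining property of $S(\alpha)$ and $S(\alpha')$ lets me replace $j^{*}\mathscr{M}, j^{*}\mathscr{M}'$ by $q^{*}\mathscr{N} \tens S^{\alpha}\mathscr{R}$ and $q^{*}\mathscr{N}' \tens S^{\alpha'}\mathscr{R}$ respectively.

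Next, using that the Schur bundles $S^{\alpha}\mathscr{R}$ are locally free of finite rank, I rewrite
\[
R\mathscr{H}om\bigl(q^{*}\mathscr{N} \tens S^{\alpha}\mathscr{R},\; q^{*}\mathscr{N}' \tens S^{\alpha'}\mathscr{R}\bigr) \;\simeq\; q^{*}R\mathscr{H}om(\mathscr{N}, \mathscr{N}') \tens (S^{\alpha}\mathscr{R})^{\vee} \tens S^{\alpha'}\mathscr{R}.
\]
Applying $Rq_{*}$ and the projection formula reduces the question to
\[
R\mathscr{H}om(\mathscr{N}, \mathscr{N}') \tens Rq_{*}\bigl((S^{\alpha}\mathscr{R})^{\vee} \tens S^{\alpha'}\mathscr{R}\bigr),
\]
so everything comes down to the vanishing of $Rq_{*}\bigl((S^{\alpha}\mathscr{R})^{\vee} \tens S^{\alpha'}\mathscr{R}\bigr)$ on $U$ for distinct partitions $\alpha \neq \alpha'$ in the $k \times (n-k)$ box.

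This last vanishing is the classical orthogonality for Kapranov's exceptional collection of Schur bundles on the grassmannian: fibrewise it follows from a Borel--Weil--Bott computation for the weights attached to $\alpha$ and $\alpha'$, and the relative version gives the desired statement on $U$. The $K$-theoretic description of $\D(\Gr(k, \mathscr{A}))$ from \cite{LSW} and \cite{Pan} is what lets this pointwise vanishing globalise correctly in the twisted setting, and étale descent along $j$ then delivers $R\mathscr{H}om(\mathscr{M}, \mathscr{M}') = 0$ on $\Gr(k, \mathscr{A})$.

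The main obstacle I expect is justifying the two-sided vanishing: Kapranov's theorem is traditionally packaged as a semiorthogonality with respect to a chosen linear order on partitions, and to get the symmetric form required here one needs to invoke the box constraint $0 \le \alpha_{i}, \alpha'_{i} \le n-k$ together with Serre duality along the fibers of $q$, both to kill higher $R^{i}q_{*}$ contributions and to swap the roles of $\alpha$ and $\alpha'$ so that the vanishing holds for every distinct pair rather than only for $\alpha$ preceding $\alpha'$.
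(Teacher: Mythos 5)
Your strategy coincides with the paper's: test the vanishing after pulling back along the \'etale map $j$, use the adjunction between $q^{*}$ and $Rq_{*}$ together with the projection formula to strip off $\mathscr{N}$ and $\mathscr{N}'$, and reduce everything to the vanishing of $Rq_{*}\bigl(\mathscr{H}\!om(S^{\alpha}\mathscr{R},S^{\alpha'}\mathscr{R})\bigr)$ over $U$, to be settled by Littlewood--Richardson plus Borel--Bott--Weil. Up to that point the two arguments agree step for step (the paper compresses your first paragraph into ``it is enough to show the result locally''). Your appeal to \cite{LSW} and \cite{Pan} is the one stray ingredient: those references play no role in the orthogonality; they enter only in the generation half of Theorem \ref{grassthm}.

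The gap sits exactly where you suspected: the two-sided vanishing for \emph{every} distinct pair $\alpha\neq\alpha'$ is not proved, and your proposed repair by relative Serre duality cannot supply it. Serre duality along $q$ identifies $Rq_{*}\,\mathscr{H}\!om(S^{\alpha'}\mathscr{R},S^{\alpha}\mathscr{R})$ with the dual of $Rq_{*}\,\mathscr{H}\!om\bigl(S^{\alpha}\mathscr{R},S^{\alpha'}\mathscr{R}\tens\omega_{q}\bigr)[k(n-k)]$, and the twist by $\omega_{q}\simeq(\det\mathscr{R})^{\tens n}\tens q^{*}(\cdots)$ pushes the weights out of the $k\times(n-k)$ box, so you never recover the untwisted $\mathscr{H}\!om$ in the opposite direction. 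Indeed the symmetric statement is false: for $\Gr(1,\mathscr{E})\simeq\P^{1}$ over a point with $n=2$, $\alpha=(1)$, $\alpha'=(0)$, one has $Rq_{*}\,\mathscr{H}\!om(\mathscr{R},\mathcal{O})=R\Gamma(\P^{1},\mathcal{O}(1))\neq 0$, even though both partitions lie in the box. Borel--Bott--Weil kills all the Littlewood--Richardson summands $S^{\beta}\mathscr{R}$ only for one ordering of the pair $(\alpha,\alpha')$; for the other ordering some $\beta+\rho$ is regular and the pushforward survives. The honest output of your computation (and of the paper's, which asserts the two-sided vanishing by citing \cite[Lemma 3.2]{Kap} and thereby overstates it in the same way) is the one-sided vanishing with respect to a suitable linear order on partitions --- which is precisely what a \emph{semi}orthogonal decomposition requires, and is the statement you should aim to prove.
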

\begin{proof}
Let $\mathscr{M}\!\!\mid_{\Gr(k,\mathscr{E})}\simeq q^{*}\!\mathscr{N}\tens S^{\alpha}\!\mathscr{R}$ and $\mathscr{M'}\!\!\mid_{\Gr(k,\mathscr{E})}\simeq q^{*}\!\mathscr{N'}\tens S^{\alpha'}\!\mathscr{R}$.

By the Littlewood-Richardson rule, the partition $\beta$ of an irreducible summand $S^{\beta}\mathscr{R}$ of $\mathscr{H}\!om(S^{\alpha}\mathscr{R},S^{\alpha'}\mathscr{R})$ is of the form $(\beta_{1},\cdots, \beta_{k})$ with $-(n-k)\leq \beta_{i}\leq n-k$. Hence, by the Borel-Bott-Weil theorem as in \cite[Lemma 3.2]{Kap} we have
\begin{equation}\label{trivial}
Rq_{*}(\mathscr{H}\!om(S^{\alpha}\!\mathscr{R},S^{\alpha'}\!\mathscr{R}))=0.
\end{equation}

It is enough to show the result locally. By the adjoint property of $Rq_{*}$ and $q^{*}$, projection formula, and (\ref{trivial}), we have
\begin{align*}
R\mathscr{H}\!om(\mathscr{M}\!\!\mid_{\Gr(k,\mathscr{E})},\mathscr{M}'\!\!\mid_{\Gr(k,\mathscr{E})})&=R\mathscr{H}\!om(q^{*}\!\mathscr{N},q^{*}\!\mathscr{N'}\tens \mathscr{H}\!om(S^{\alpha}\!\mathscr{R},S^{\alpha'}\!\mathscr{R}))\\
&=R\mathscr{H}\!om(\mathscr{N},Rq_{*}(q^{*}\!\mathscr{N'}\tens \mathscr{H}\!om(S^{\alpha}\!\mathscr{R},S^{\alpha'}\!\mathscr{R})))\\
&=R\mathscr{H}\!om(\mathscr{N},\mathscr{N'}\tens Rq_{*}(\mathscr{H}\!om(S^{\alpha}\!\mathscr{R},S^{\alpha'}\!\mathscr{R})))\\
&=0.\qedhere
\end{align*}
\end{proof}

\begin{theorem}\label{grassthm}
Let $(S(\alpha)\,|\, \alpha=(\alpha_{1},\cdots,\alpha_{k}), 0\leq \alpha_{i}\leq n-k)$ be a sequence of the full subcategories of $\D(\Gr(k,\mathscr{A}))$ by the lexicographical order on $\alpha$. Then this sequence gives a semiorthogonal decomposition of $\D(\Gr(k,\mathscr{A}))$.
\end{theorem}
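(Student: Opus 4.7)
Semiorthogonality follows immediately from Lemma~\ref{orthogonality}: given distinct partitions $\alpha,\alpha'$ and any $\mathscr{M}\in S(\alpha)$, $\mathscr{M}'\in S(\alpha')$, the vanishing $R\mathscr{H}\!om(\mathscr{M},\mathscr{M}')=0$ passes to derived global sections, giving $R\Hom(\mathscr{M},\mathscr{M}')=0$. Since the lemma is symmetric in $\alpha,\alpha'$, the categories are in fact mutually orthogonal, which is strictly stronger than what the lexicographic order demands. Note also that the orthogonality part of the proof is indifferent to the choice of order on partitions.

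The substantive content is therefore the generation statement $\D(\Gr(k,\mathscr{A}))=\langle S(\alpha)\rangle$. The plan is to descend Kapranov's split-form decomposition
\[
\D(\Gr(k,\mathscr{E})) \;=\; \bigl\langle\, q^{*}\D(U)\tens S^{\alpha}\!\mathscr{R} \,\bigm|\, 0\leq \alpha_{i}\leq n-k\,\bigr\rangle
\]
from \cite{Kap} along the \'etale cover $i:U\to X$. For any $\mathscr{M}\in\D(\Gr(k,\mathscr{A}))$, the pullback $j^{*}\mathscr{M}$ admits a cone filtration whose successive quotients have the form $q^{*}\mathscr{N}_{\alpha}\tens S^{\alpha}\!\mathscr{R}$ for certain $\mathscr{N}_{\alpha}\in\D(U)$; my aim is to produce a compatible filtration of $\mathscr{M}$ itself by objects from the various $S(\alpha)$, and then invoke faithfully flat \'etale descent of morphisms to verify that the pieces assemble correctly.

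The main obstacle, and the reason the $K$-theoretic input of Panin \cite{Pan} and Levine--Srinivas--Weyman \cite{LSW} is needed, is to exhibit for each $\alpha$ a global object of $S(\alpha)$ whose pullback under $j$ realises $S^{\alpha}\!\mathscr{R}$ (up to tensoring with a pullback from $U$). The tautological sheaf $\mathscr{R}$ does not in general descend to $\Gr(k,\mathscr{A})$; it only descends as a module over a suitable tensor power of $\mathscr{A}$. This is precisely what the analysis of $K_{0}(\Gr(k,\mathscr{A}))$ in \cite{Pan} and \cite{LSW} provides: a basis of twisted Schur-type classes, one for each admissible $\alpha$, constructed as $p^{*}\mathscr{A}^{\tens m}$-modules for appropriate $m$ depending on $|\alpha|$. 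Promoting these $K$-classes to actual objects of $\D(\Gr(k,\mathscr{A}))$ and inducting on $\alpha$ in lexicographic order, with each step peeling off a piece of the Kapranov filtration of $j^{*}\mathscr{M}$, then produces $\mathscr{M}$ as an iterated extension of objects from the $S(\alpha)$'s, completing the proof.
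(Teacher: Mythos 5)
Your treatment of semiorthogonality is fine as far as it goes, but be careful with your remark that the $S(\alpha)$ are ``mutually orthogonal'': that cannot be literally true, since a two-sided orthogonal decomposition would split $\D(\Gr(k,\mathscr{A}))$ into a direct sum, which is impossible for a connected scheme. The Borel--Bott--Weil vanishing underlying Lemma~\ref{orthogonality} is genuinely one-sided (already for $\P^{1}$ one has $R\mathscr{H}\!om(\mathcal{O}(-1),\mathcal{O})\neq 0$ while $R\mathscr{H}\!om(\mathcal{O},\mathcal{O}(-1))=0$), so the lexicographic order is not decorative, and your claim that the argument is ``indifferent to the choice of order'' is wrong.

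The real gap is in the generation argument. You propose to take the Kapranov filtration of $j^{*}\mathscr{M}$ on $\Gr(k,\mathscr{E})$ and descend it, ``invoking faithfully flat \'etale descent of morphisms to verify that the pieces assemble correctly.'' Derived categories do not satisfy \'etale descent at the triangulated level: objects of $\D(\Gr(k,\mathscr{E}))$ equipped with descent data do not glue to objects of $\D(\Gr(k,\mathscr{A}))$, and $\Hom$ in the derived category is not a sheaf for the \'etale topology (one only gets a descent spectral sequence with possibly nonzero differentials). So even granting that the semiorthogonal filtration upstairs is canonical, nothing in your argument actually produces a filtration of $\mathscr{M}$ itself over $\Gr(k,\mathscr{A})$. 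The paper sidesteps this entirely: the only descent performed is descent of honest coherent sheaves (where it is valid), namely the complex $\mathscr{R}\boxtimes\mathscr{T}^{*}\to\mathcal{O}_{\Gr(k,\mathscr{E})\times\Gr(k,\mathscr{E})}$ resolving the diagonal descends to $\mathscr{F}_{(1)}\boxtimes\mathscr{G}_{(1)}\to\mathcal{O}_{\Gr(k,\mathscr{A})\times\Gr(k,\mathscr{A})}$, giving a global Koszul resolution of $\mathcal{O}_{\Delta(\Gr(k,\mathscr{A})/X)}$ with terms $\bigoplus_{|\alpha|=m}\mathscr{F}_{\alpha}\boxtimes\mathscr{G}_{\alpha^{*}}$. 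Writing $\mathscr{M}=R(\pi_{1})_{*}(\pi_{2}^{*}\mathscr{M}\tens\mathcal{O}_{\Delta(\Gr(k,\mathscr{A})/X)})$ and applying this Fourier--Mukai transform termwise to the resolution produces the required filtration of $\mathscr{M}$ globally, with graded pieces $R(\pi_{1})_{*}\big(\pi_{2}^{*}(\mathscr{M}\tens\mathscr{G}_{\alpha^{*}})\big)\tens\mathscr{F}_{\alpha}$ lying in $S(\alpha)$ by construction. You correctly identified the twisted Schur sheaves of \cite{Pan} and \cite{LSW} as the essential input, but without the resolution of the diagonal they do not by themselves yield generation.
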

\begin{proof}
By Lemma \ref{orthogonality}, it suffices to show that $(S(\alpha))$ generates $\D(\Gr(k,\mathscr{A}))$. Following \cite{LSW} or \cite{Pan}, there exist sheaves $\mathscr{F}_{\alpha}$ of right $\mathscr{A}^{|\alpha|}$-modules and sheaves $\mathscr{G}_{\alpha^{*}}$ of left $\mathscr{A}^{|\alpha|}$-modules such that
\[j^{*}\!\mathscr{F}_{\alpha}\simeq S^{\alpha}\!\mathscr{R}\tens q^{*}((\mathscr{E}^{*})^{\tens |\alpha|}),\,j^{*}\!\mathscr{G}_{\alpha^{*}}\simeq q^{*}(\mathscr{E}^{\tens |\alpha|})\tens S^{\alpha^{*}}\!\mathscr{T}^{*}.
\]
Moreover, the sequence $\mathscr{R} \boxtimes\! \mathscr{T}^{*}\to \mathcal{O}_{\Gr(k,\mathscr{E})\times \Gr(k,\mathscr{E})}\to \mathcal{O}_{\Delta(\Gr(k,\mathscr{E})/U)}\to 0$ descends to the sequence $\mathscr{F}_{(1)} \boxtimes\, \mathscr{G}_{(1)}\to \mathcal{O}_{\Gr(k,\mathscr{A})\times \Gr(k,\mathscr{A})}\to \mathcal{O}_{\Delta(\Gr(k,\mathscr{A})/X)}\to 0$. Hence, we have the Koszul resolution:
\[0\to \Lambda^{k(n-k)}(\mathscr{F}_{(1)} \boxtimes\, \mathscr{G}_{(1)})\to \Lambda^{k(n-k)-1}(\mathscr{F}_{(1)} \boxtimes\, \mathscr{G}_{(1)})\to \cdots\]
\[\cdots \to \mathscr{F}_{(1)} \boxtimes\, \mathscr{G}_{(1)}\to \mathcal{O}_{\Gr(k,\mathscr{A})\times_{X} \Gr(k,\mathscr{A})}\to \mathcal{O}_{\Delta(\Gr(k,\mathscr{A})/X)}\to 0.\]
Therefore, as $\Lambda^{m}(\mathscr{F}_{(1)} \boxtimes\, \mathscr{G}_{(1)})=\bigoplus_{|\alpha|=m} \mathscr{F}_{\alpha} \boxtimes\, \mathscr{G}_{\alpha^{*}}$ for $1\leq m\leq k(n-k)$, the sheaf of ideals of the diagonal embedding $\mathcal{O}_{\Delta(\Gr(k,\mathscr{A})/X)}$ is in the subcategory \[\langle \pi_{1}^{*}\mathscr{F}_{\alpha}\tens \pi_{2}^{*}\mathscr{G}_{\alpha^{*}}\,|\, 0\leq |\alpha|\leq k(n-k)\rangle\] of $D(\Gr(k,\mathscr{A})\times \Gr(k,\mathscr{A}))$. By the projection formula, for any $\mathscr{M}\in D(\Gr(k,\mathscr{A}))$ we have $\mathscr{M}=R(\pi_{1})_{*}(\pi_{2}^{*}\mathscr{M}\tens \mathcal{O}_{\Delta(\Gr(k,\mathscr{A})/X)})$.

To finish the proof, it is enough to verify that \[R(\pi_{1})_{*}\big(\pi_{2}^{*}\mathscr{M}\tens (\pi_{1}^{*}\mathscr{F}_{\alpha}\tens \pi_{1}^{*}\mathscr{G}_{\alpha^{*}})\big)\in S(\alpha):\] since we have
$R(\pi_{1})_{*}\big(\pi_{2}^{*}\mathscr{M}\tens (\pi_{1}^{*}\mathscr{F}_{\alpha}\tens \pi_{1}^{*}\mathscr{G}_{\alpha^{*}})\big)=R(\pi_{1})_{*}\big(\pi_{2}^{*}(\mathscr{M}\tens \mathscr{G}_{\alpha^{*}})\big)\tens \mathscr{F}_{\alpha}$, this is isomorphic to
\[q^{*}\big(Rq_{*}(\mathscr{M}\tens S^{\alpha^{*}}\!\mathscr{T})\big)\tens S^{\alpha}\!\mathscr{R} \text{ over } \Gr(k,\mathscr{E}).\qedhere\]
\end{proof}

\bigskip

Let $1\leq k_{1}<\cdots <k_{m}<n$ be a sequence of integers. Given a sheaf of Azumaya algebras $\mathscr{A}$ of rank $n^{2}$ over $X$, we denote by $\Fl(k_1,\cdots, k_m, \mathscr{A})$ the functor defined by  $(Y\stackrel{\phi}\to X)\mapsto$ the set of sheaves of left ideals $\mathscr{I}_{1}\subset \cdots \subset \mathscr{I}_{m}$ of $\phi^{*}\mathscr{A}$ such that $\phi^{*}\mathscr{A}/\mathscr{I}_{i}$ is a locally free $\mathscr{O}_{Y}$-modules of rank $n(n-k_{i})$.

As in the grassmannian case, we have  an \'etale covering $i:U\to X$ and a locally free sheaf $\mathscr{E}$ of rank $n$ over $U$ with the following pullback diagram
$$\xymatrixcolsep{4pc}
\xymatrix{
\Fl(k_{1}, \cdots, k_{m},\mathscr{E}\!nd(\mathscr{E})) \ar[r]^{j} \ar[d]^{q} & \Fl(k_{1}, \cdots, k_{m},\mathscr{A})\ar[d]^{p}  \\
U \ar[r]^{i} & X,
}
$$
where $\Fl(k_{1}, \cdots, k_{m},\mathscr{E}\!nd(\mathscr{E}))$ is identified with $\Fl(k_{1}, \cdots, k_{m},\mathscr{E})$. We have the tautological flags
\[\mathscr{R}_{1}\hookrightarrow \cdots \hookrightarrow \mathscr{R}_{m}\hookrightarrow q^{*}\mathscr{E}\twoheadrightarrow \mathscr{T}_{1}\twoheadrightarrow \cdots \twoheadrightarrow\mathscr{T}_{m},\]
where $\rank(\mathscr{R}_{i})=k_{i}$ and $\rank(\mathscr{T}_{i})=n-k_{i}$.

Let $\alpha(1), \cdots, \alpha(m-1), \alpha(m)$ be partitions of the forms \[(\alpha_1, \cdots, \alpha_{k_{1}}), \cdots, (\alpha_1, \cdots, \alpha_{k_{m-1}}), (\alpha_1, \cdots, \alpha_{k_{m}})\] with $0\leq \alpha_i\leq k_{2}-k_{1}, \cdots, 0\leq \alpha_i\leq k_{m}-k_{m-1}, 0\leq \alpha_i\leq n-k_{m}$, respectively. For $1\leq i\leq m$, we define $S(\alpha(1),\cdots,\alpha(m))$ to be the full subcategory of $\D(\Fl(k_{1}, \cdots, k_{m},\mathscr{A}))$ generated by $\mathscr{M}$ in $\D(\Fl(k_{1}, \cdots, k_{m},\mathscr{A}))$ satisfying
\[\mathscr{M}\!\!\mid_{\Fl(k_{1}, \cdots, k_{m},\mathscr{E})}\simeq q^{*}\!\mathscr{N}\tens S^{\alpha(1)}\!\mathscr{R}_{1}\tens \cdots \tens S^{\alpha(m)}\!\mathscr{R}_{m},\]
for some $\mathscr{N}\in \D(U)$.

\begin{corollary}
Let $1\leq k_{1}<\cdots <k_{m}<n$ be a sequence of integers and let $(S(\alpha(1),\cdots,\alpha(m))\,|\, \text{ all partitions of the form } \alpha(i), 1\leq i\leq m )$ be a sequence of the full subcategories of $\D(\Fl(k_{1}, \cdots, k_{m},\mathscr{A}))$ in lexicographical order. Then this gives a semiorthogonal decomposition of $\D(\Fl(k_{1}, \cdots, k_{m},\mathscr{A}))$.
\end{corollary}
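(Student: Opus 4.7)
The plan is to induct on $m$, with $m=1$ being Theorem \ref{grassthm}. For the inductive step, I would use the forgetful morphism
\[\pi\colon \Fl(k_1,\ldots,k_m,\mathscr{A})\to\Gr(k_m,\mathscr{A})\]
retaining only the largest ideal. On the étale cover this restricts to $\Fl(k_1,\ldots,k_m,\mathscr{E})\to\Gr(k_m,\mathscr{E})$, whose fiber over $\mathscr{R}_m$ is the untwisted flag $\Fl(k_1,\ldots,k_{m-1},\mathscr{R}_m)$ of subflags of $\mathscr{R}_m$.

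The first step is to exhibit $\Fl(k_1,\ldots,k_m,\mathscr{A})$ as the twisted flag variety $\Fl(k_1,\ldots,k_{m-1},\mathscr{B})$ over $\Gr(k_m,\mathscr{A})$, where $\mathscr{B}\Dfn \mathscr{E}\!nd_{p^{*}\mathscr{A}}(\mathcal{I}_m)$ is the rank $k_m^{2}$ sheaf of Azumaya algebras associated to the tautological left ideal $\mathcal{I}_m\subset p^{*}\mathscr{A}$. Étale locally $\mathscr{B}\simeq \mathscr{E}\!nd(\mathscr{R}_m)$, and chains of sub-left-ideals of $\mathcal{I}_m$ correspond to chains of left ideals of $\mathscr{B}$ of the appropriate ranks.

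With that identification in place, I would apply the corollary at length $m-1$ to $\Fl(k_1,\ldots,k_{m-1},\mathscr{B})$ over $\Gr(k_m,\mathscr{A})$: the role of ``$n$'' is played by $k_m$, so this yields a semiorthogonal decomposition indexed by $(\alpha(1),\ldots,\alpha(m-1))$ with $\alpha(i)$ having $k_i$ parts bounded by $k_{i+1}-k_i$ (in particular $\alpha(m-1)$ has $k_{m-1}$ parts bounded by $k_m-k_{m-1}$), each component equivalent to $\D(\Gr(k_m,\mathscr{A}))$. Applying Theorem \ref{grassthm} further decomposes $\D(\Gr(k_m,\mathscr{A}))$ by $\alpha(m)$ with $k_m$ parts bounded by $n-k_m$, producing the full decomposition indexed by $(\alpha(1),\ldots,\alpha(m))$ with exactly the ranges claimed. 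The semiorthogonality in lexicographic order is obtained by extending Lemma \ref{orthogonality}: for distinct tuples, vanishing of $R\mathscr{H}\!om$ reduces, via the projection formula, to vanishing of iterated $Rq_{*}$ of tensor products of Schur functors on $\Fl(k_1,\ldots,k_m,\mathscr{E})$, which follows from Littlewood--Richardson together with Borel--Bott--Weil applied successively along the tower of relative grassmannian bundles down to $U$.

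The main obstacle is the initial identification $\Fl(k_1,\ldots,k_m,\mathscr{A})\cong \Fl(k_1,\ldots,k_{m-1},\mathscr{B})$: constructing $\mathscr{B}$ and matching the functors of points requires some care with the module-theoretic descent from the étale cover. Once that is in place, the inductive argument combined with Theorem \ref{grassthm} assembles the decomposition with the correct partition bounds.
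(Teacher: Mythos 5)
Your proposal is correct and follows essentially the same route as the paper: induction on $m$, realizing the twisted flag variety as a relative twisted grassmannian/flag bundle via a descended Azumaya algebra built from a tautological subsheaf, then combining Theorem \ref{grassthm} with the inductive hypothesis and an extension of Lemma \ref{orthogonality} along the tower of projections. The only (cosmetic) difference is that you peel off the largest index, writing $\Fl(k_1,\ldots,k_m,\mathscr{A})$ as $\Fl(k_1,\ldots,k_{m-1},\mathscr{B})$ over $\Gr(k_m,\mathscr{A})$, whereas the paper peels off the smallest, writing it as $\Gr(k_1,\mathscr{A}')$ over $\Fl(k_2,\ldots,k_m,\mathscr{A})$ with $\mathscr{A}'$ descended from $\mathscr{E}\!nd(\mathscr{R}'_2)$; both yield the same partition bounds.
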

\begin{proof}
We shall prove by induction on $m$. The case $m=1$ follows from Theorem \ref{grassthm}. Assume that the result holds for $m-1$. There are projections
\[\Fl(k_{1}, \cdots, k_{m},\mathscr{E})\xra{q_{m}}\cdots \xra{q_{2}}\Fl(k_{m},\mathscr{E})\xra{q_{1}} U\]
and
\[\Fl(k_{1}, \cdots, k_{m},\mathscr{A})\xra{p_{m}}\cdots \xra{p_{2}}\Fl(k_{m},\mathscr{A})\xra{p_{1}} X.\]
Let $\mathscr{R}'_{2}\subset (q_{1}\circ \cdots \circ q_{m-1})^{*}\mathscr{E}$ be the tautological subsheaf over $\Fl(k_{2}, \cdots, k_{m},\mathscr{E})$ and let $\mathscr{A}'$ be the sheaf of Azumaya algebra over $\Fl(k_{2}, \cdots, k_{m},\mathscr{A})$ from $\mathscr{E}\!nd(\mathscr{R}'_{2})$ by descent. Then, we have $\Fl(k_{1}, \cdots, k_{m},\mathscr{E})=\Gr_{\Fl(k_{2}, \cdots, k_{m},\mathscr{E})}(k_{1},\mathscr{R}'_{2})$ and $\Fl(k_{1}, \cdots, k_{m},\mathscr{A})=\Gr_{\Fl(k_{2}, \cdots, k_{m},\mathscr{A})}(k_{1},\mathscr{A}')$. Now the result follows from the proofs of Lemma \ref{orthogonality} and Theorem \ref{grassthm}.
\end{proof}

\end{document}